\def\ex{\mbox{ex}}
\def\spex{\mbox{spex}}
\newtheorem{thm}{Theorem}[section]
\newtheorem{lem}[thm]{Lemma}
\newtheorem{prop}[thm]{Proposition}
\newtheorem{claim}{Claim}
\newtheorem{case}{Case}
\begin{document}

\title{Spectral extrema of graphs with bounded clique number and matching number
}
\author{Hongyu Wang$^a$, \quad Xinmin Hou$^{a,b}$, \quad Yue Ma$^a$\\
\small $^{a}$ School of Mathematical Sciences\\
\small University of Science and Technology of China, Hefei, Anhui 230026, China.\\
\small  $^{b}$ CAS Key Laboratory of Wu Wen-Tsun Mathematics\\
\small University of Science and Technology of China, Hefei, Anhui 230026, China.\\
\small Hefei, Anhui 230026, China.
}

\date{}

\maketitle

\begin{abstract}
For a set of graphs $\mathcal{F}$, 
let $\ex(n,\mathcal{F})$ and $\spex(n,\mathcal{F})$ denote the maximum number of edges and the maximum spectral radius of an $n$-vertex $\mathcal{F}$-free graph, respectively.
Nikiforov ({\em LAA}, 2007) gave the spectral version of the Tur\'an Theorem by showing that $\spex(n, K_{k+1})=\lambda (T_{k}(n))$, where $T_k(n)$ is the $k$-partite Tur\'an graph on $n$ vertices. In the same year, Feng, Yu and Zhang ({\em LAA}) determined the exact value of $\spex(n, M_{s+1})$, where $M_{s+1}$ is a matching with $s+1$ edges. 
Recently, Alon and Frankl~(arXiv2210.15076) gave the exact value of $\ex(n,\{K_{k+1},M_{s+1}\})$. 
In this article, we give the spectral version of the result of Alon and Frankl by determining the exact value of $\spex(n,\{K_{k+1},M_{s+1}\})$ when $n$ is large.
\end{abstract}

\section{Introduction}
In this paper, we consider only simple and finte graphs. Let $G = (V, E)$ be a graph with vertex set $V = V (G)$ and edge set $ E = E(G)$. We write $|G|$ for $|E(G)|$ through this paper.   Let $A(G)$ be the {\em adjacency matrix} of $G$ and let $\lambda(G)$ be the largest eigenvalue of $A(G)$, and call
it  the {\em spectral radius} of $G$.

Let $\mathcal{F}$ be a family of graphs, we say graph $G$ is {\em $\mathcal{F}$-free} if $G$ does not contain any graph in $\mathcal{F}$ as a subgraph. As the classical Tur\'an type problem determines the maximum number of edges of an $n$-vertex $\mathcal{F}$-free graph, called {\em Tur\'an number} and denoted by $\ex(n,\mathcal{F})$. Brualdi-Solheid-Tur\'an type problems consider the maximum spectral radius of an $n$-vertex $\mathcal{F}$-free graph, denoted by $\spex(n,\mathcal{F})$, i.e. $$\spex(n,\mathcal{F})=\max\{\lambda(G) : G \text{ is an $n$-vertex $\mathcal{F}$-free graph}\}.$$
In the recent ten years, there are fruitful results of Brualdi-Solheid-Tur\'an type problems, {for example, in~\cite{Babai,odd wheels,Ck,T4,subtrees,NWK23,surveyN,Pk,survey,Kst-minort,Kst-minorz}. }

 Let $K_n$  and $\overline{K_n}$ denote the complete graph and the empty graph on $n$ vertices, respectively. For any graph $G$ and $U\subset V(G)$, write $G-U=G[V(G)\backslash U]$. 
 Let $K_{n_1,\cdots,n_k}$ denote the complete $k$-partite graph with partition sets of sizes $n_1,\ldots,n_k$.
 A Tur\'an graph $T_{k}(n)$ is the complete $k$-partite graph on $n$ vertices whose partition sets have sizes as equal as possible.
Define  $G_k(n,s)=T_{k-1}(s)\vee \overline{K_{n-s}}$, the join of the Tur\'an graph $T_{k-1}(s)$ and  empty graph $\overline{K_{n-s}}$. Clearly,  $G_k(n,s)$ is a complete $k$-partite graph on $n$ vertices with one partition set of size $n-s$ and the others having sizes as equal as possible. 
 Write $M_k$ for a matching consisting of $k$ edges.

A fundamental theorem  (Tur\'an Theorem) due to Tur\'an~\cite{Tr}  gives $\ex(n;K_{k+1}) = |E(T_{k}(n))|$ for $n > k + 1 > 3$. In 2007, Nikiforov~\cite{Niki} gave a spectral version of the Tur\'an Theorem by showing that $\lambda(G)\leq \lambda (T_{k}(n))$ for every $n$-vertex $K_{k+1}$-free graph $G$, with equality if and only if $G\cong T_{k}(n)$. When considering the bounded matching number, Feng, Yu and Zhang~\cite{zxd} proved that 
$$\spex(n, M_{s+1})=\begin{cases}
	\lambda(K_n), \mbox{ if $n=2 s$ or $2 s+1$}; \\
	\lambda(K_{2 s+1} \cup \overline{K_{n-2 s-1}}), \mbox{ if $2 s+2 \leqslant n<3 s+2$};\\ 
	\lambda(K_s \vee \overline{K_{n-s}}) \mbox{ or } \lambda(K_{2 s+1} \cup \overline{K_{n-2 s-1}}), \mbox{ if $n=3 s+2$};\\
	\lambda(K_s \vee \overline{K_{n-s}}), \mbox{ if $n>3 s+2$}.
\end{cases}
$$
Recently, Ni, Wang and Kang~\cite{NWK23} extended the above result by determining the exact value of $\spex(n, kK_{r+1})$ for $k\ge 2$, $r\ge 2$, and sufficiently large $n$. 


Another fundamental result in graph theory is the
Erd\H{o}s-Gallai Theorem~\cite{EG59}, showing that $$\ex(n, M_{s+1})=\max\left\{\left|E(G_{s+1}(n, s))\right|, {2s+1\choose s+1}\right\}.$$ 
Recently, Alon and Frankl~\cite{AF} combined the forbidden graphs of Tur\'an Theorem and Erd\H{o}s-Gallai Theorem by showing that
\begin{thm}[\cite{AF}]\label{AF}
For $n\ge 2s+1$ and $k\ge 2$, 
$$\ex(n,\{K_{k+1},M_{s+1}\})=\max\{|T_k(2s+1)|,|G_k(n,s)|\}\mbox{.}$$
\end{thm}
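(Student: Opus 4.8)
\emph{Proof sketch.} We prove matching bounds. For the lower bound, $T_k(2s+1)\cup\overline{K_{n-2s-1}}$ (defined since $n\ge2s+1$) is $k$-partite, hence $K_{k+1}$-free, and has matching number $\nu(T_k(2s+1))=s$ — a complete multipartite graph on $2s+1$ vertices with every part of size at most $s+1$ has a matching of size $s$ — so it is $M_{s+1}$-free and has $|T_k(2s+1)|$ edges; likewise $G_k(n,s)=T_{k-1}(s)\vee\overline{K_{n-s}}$ is $k$-partite, hence $K_{k+1}$-free, and the $s$ vertices of its $T_{k-1}(s)$-part form a vertex cover, so it is $M_{s+1}$-free and has $|G_k(n,s)|$ edges. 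Thus $\ex(n,\{K_{k+1},M_{s+1}\})\ge\max\{|T_k(2s+1)|,|G_k(n,s)|\}$.

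For the reverse inequality, let $G$ be an $n$-vertex $\{K_{k+1},M_{s+1}\}$-free graph with $\ex(n,\{K_{k+1},M_{s+1}\})$ edges. Deleting isolated vertices preserves $|E(G)|$ and can only decrease the number of vertices, so by the monotonicity of $|G_k(\cdot,s)|$ we may assume $\delta(G)\ge1$; if then $n<2s+1$, Tur\'an's theorem and $|T_k(2s+1)|\ge|G_k(2s+1,s)|$ finish the proof, so assume $n\ge2s+1$. Take the Gallai--Edmonds decomposition $V(G)=D\cup A\cup C$ and write $D=D_1\cup D_2$, where $D_1$ is the union of the singleton components of $G[D]$ and $D_2$ the union of the larger ones. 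From $\nu(G)=\frac12(n-c(G[D])+|A|)\le s$ together with $c(G[D])\ge|A|$, the fact that $G[C]$ has a perfect matching, and the fact that every component inside $D_2$ has at least $3$ vertices, one gets
$$2|A|+\tfrac{2}{3}|D_2|+|C|\le 2s,$$
so $a:=|A|\le s$ and $|A\cup D_2\cup C|\le3s$. By the Gallai--Edmonds structure $D_1$ is independent and $N_G(D_1)\subseteq A$, hence $|E(G)|=e_G(A)+e_G(A,D_2\cup C)+e_G(D_2\cup C)+e_G(D_1,A)$ with $e_G(D_1,A)\le a|D_1|$.

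We now split on $a$. If $a=0$, then $D_1=\emptyset$ (its vertices would be isolated), so $G$ is a vertex-disjoint union of $K_{k+1}$-free factor-critical graphs and a $K_{k+1}$-free graph with a perfect matching whose matching numbers sum to at most $s$; a standard super-additivity argument for the Tur\'an function then gives $|E(G)|\le|T_k(2s+1)|$. If $a=s$, the displayed inequality forces $D_2=C=\emptyset$, so $|E(G)|=e_G(A)+e_G(D_1,A)$; either some vertex of $D_1$ is complete to $A$ — then $G[A]$ is $K_k$-free and $|E(G)|\le|T_{k-1}(s)|+s(n-s)=|G_k(n,s)|$ — or no vertex of $D_1$ is complete to $A$, so $e_G(D_1,A)\le(s-1)(n-s)$ and, since $e_G(A)\le\binom{s}{2}$, the surplus $(n-s)$ gives $|E(G)|\le|G_k(n,s)|$ once $n$ is large. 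Finally, if $1\le a<s$, then $e_G(D_1,A)\le a(n-a)$ is short of $s(n-s)$ by a term linear in $n$, which for large $n$ absorbs the $O(s^2)$ total contribution of $A\cup D_2\cup C$, so again $|E(G)|\le|G_k(n,s)|$.

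The step I expect to be the real obstacle is the tight, constant-level bookkeeping needed to obtain the sharp value $\max\{|T_k(2s+1)|,|G_k(n,s)|\}$ for \emph{every} $n\ge2s+1$ rather than only for large $n$: the error term $e_G(A\cup D_2\cup C)$ and the gap $s|D_1|-e_G(D_1,A)$ are both of order $s^2$, exactly the order of the lower-order term $|T_{k-1}(s)|-s^2$ in $|G_k(n,s)|$, so no slack can be lost, and the intermediate range $2s+1\le n\le 3s$ — where neither extremal graph is obviously dominant — must be handled by a finite but careful analysis, once more controlling the factor-critical pieces through the Tur\'an function. For the application to $\spex(n,\{K_{k+1},M_{s+1}\})$ only large $n$ is relevant, and for such $n$ the above already shows $\ex(n,\{K_{k+1},M_{s+1}\})=|G_k(n,s)|$.
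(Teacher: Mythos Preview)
The paper does \emph{not} prove this statement: Theorem~\ref{AF} is quoted from Alon and Frankl~\cite{AF} and used as a black box, so there is no ``paper's own proof'' to compare your sketch against. The present paper only proves the spectral analogue (Theorem~\ref{main}), and for that it does not invoke Theorem~\ref{AF} directly but rather argues from the Tutte--Berge/Gallai--Edmonds structure (Lemma~\ref{Tutte-Berge}) on the extremal spectral graph.

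As to your sketch itself: the lower bound and the Gallai--Edmonds set-up are correct, and the displayed inequality $2|A|+\tfrac{2}{3}|D_2|+|C|\le 2s$ is derived properly. However, you explicitly concede the main difficulty: your cases $a=s$ with no vertex of $D_1$ complete to $A$, and $1\le a<s$, are only settled ``once $n$ is large'' or ``for large $n$'', whereas the theorem is claimed for every $n\ge 2s+1$. So what you have is a proof of the large-$n$ case plus an honest identification of the remaining obstacle, not a proof of the full statement. (Your closing remark that only large $n$ matters for the spectral application in this paper is accurate, but that does not rescue the proof of Theorem~\ref{AF} as stated.) If you want the full range, the Alon--Frankl argument handles the bookkeeping you flag by a shifting/compression step on the set $B$ (your $A$) that normalises neighbourhoods before counting, rather than by a raw edge-count as you attempt; that is the missing idea in your intermediate-range analysis.
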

{Observe that when $n$ is sufficiently large,  $$\ex(n,\{K_{k+1},M_{s+1}\})=\max\{|T_k(2s+1)|,|G_k(n,s)|\}=|G_k(n,s)|.$$} In this note, we consider the Brualdi-Solheid-Tur\'an type problem of Theorom~\ref{AF} when $n$ is sufficiently large. Here is our main theorem.
\begin{thm}\label{main}
For $n\ge 4s^2+9s$ and $k\ge 2$, 
$$\spex(n,\{K_{k+1},M_{s+1}\})=\lambda(G_k(n,s))\mbox{.}$$
\end{thm}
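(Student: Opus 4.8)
The plan is to prove the matching lower and upper bounds on $\spex(n,\{K_{k+1},M_{s+1}\})$, the lower bound from the construction $G_k(n,s)$ and the upper bound by analysing an arbitrary extremal graph. For the lower bound: $G_k(n,s)$ is $k$-partite, hence $K_{k+1}$-free, and deleting the $s$ vertices of its $T_{k-1}(s)$-part leaves an independent set, so $\nu(G_k(n,s))\le s$; hence $\spex(n,\{K_{k+1},M_{s+1}\})\ge\lambda(G_k(n,s))$. I would also record $\sqrt{s(n-s)}\le\lambda(G_k(n,s))<\sqrt{s(n-s)}+s$ --- the lower estimate because $K_{s,n-s}\subseteq G_k(n,s)$, the upper one from the equitable quotient matrix of $K_s\vee\overline{K_{n-s}}\supseteq G_k(n,s)$ --- so that $\lambda(G_k(n,s))=\Theta(\sqrt{sn})$, the order of magnitude driving every comparison below.

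For the upper bound, let $G$ be extremal with $\lambda:=\lambda(G)\ge\lambda(G_k(n,s))$ and Perron vector $\mathbf x>0$, normalised so that $x_z=\max_v x_v=1$ for some vertex $z$. First, by induction on $s$ (base $s\le 1$ direct), I claim $\nu(G)=s$: if $\nu(G)\le s-1$ then $G$ is also $M_s$-free, so the induction hypothesis gives $\lambda\le\lambda(G_k(n,s-1))<\lambda(G_k(n,s))$, contradicting extremality (here $\lambda(G_k(n,s-1))<\lambda(G_k(n,s))$ since $G_k(n,s-1)$ embeds as a proper subgraph of the connected graph $G_k(n,s)$). Second, $G$ is connected: a componentwise exchange argument shows one can always add an edge between two components without creating $K_{k+1}$, and without creating $M_{s+1}$ unless both components are ``matching-rigid'', forcing --- if $G$ were disconnected --- that every component has a perfect matching (impossible, as then $n=2s<4s^2+9s$) or every component is factor-critical (then $\lambda=O(s)\ll\sqrt{s(n-s)}$, impossible for $n\ge 4s^2+9s$). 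Being connected and extremal, $G$ is edge-maximal $\{K_{k+1},M_{s+1}\}$-free, since $\lambda(G+e)>\lambda(G)$ for every non-edge $e$.

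Next comes the rough structure. Fix a maximum matching; its $s$ edges cover a set $B$ with $|B|=2s$, and $I:=V(G)\setminus B$ is independent. Since every edge meets $B$, the eigen-equation yields $\lambda x_v\le\sigma:=\sum_{w\in B}x_w\le 2s$ for $v\in I$, i.e.\ $x_v=O(\sqrt{s/n})$ for all $v\in I$; the same bound forces the maximiser $z$ into $B$. (Equivalently, one may run everything through the Gallai--Edmonds decomposition, which presents an independent set of $n-O(s)$ vertices whose neighbourhoods all lie in a fixed set $A$ with $|A|\le s$.)

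The crux, and main obstacle, is to show that the Perron mass concentrates on exactly $s$ vertices: there is a set $W$ of size $s$ whose smallest $\mathbf x$-value exceeds a positive constant depending only on $s$, while every vertex outside $W$ has $\mathbf x$-value $O(\sqrt{s/n})$. Granting this, the rest follows: $G-W$ has no edge --- an edge there, combined with a suitably chosen maximum matching of $G$ (using $\nu(G)=s$ and near-perfect matchings of the factor-critical blocks), would either exhibit an $M_{s+1}$ or allow an edge-swap that strictly increases $\lambda$, both impossible --- so $W$ is a vertex cover; edge-maximality then forces every $v\notin W$ to be complete to $W$ (adding such an edge keeps $W$ a cover of size $s$ and keeps $G$ $K_{k+1}$-free); hence $G=G[W]\vee\overline{K_{n-s}}$ with $G[W]$ a $K_{k+1}$-free graph on $s$ vertices, whence $G[W]=T_{k-1}(s)$ by Tur\'an's theorem (equivalently Nikiforov's spectral theorem applied to the Perron weights on $W$), i.e.\ $G=G_k(n,s)$ and $\lambda\le\lambda(G_k(n,s))$, with equality only for $G\cong G_k(n,s)$. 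Proving the concentration statement --- bridging the gap from the $2s$-vertex set $B$ coming from the matching down to the $s$-vertex centre $W$, excluding the possibility that the mass spreads over more vertices, and simultaneously ruling out the ``boundary'' graphs that are extremal in Alon--Frankl for small $n$ and carry a few edges outside the centre --- is exactly where the hypothesis $n\ge 4s^2+9s$ is consumed, through delicate quantitative estimates on $\mathbf x$.
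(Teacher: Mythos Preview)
Your approach differs genuinely from the paper's. The paper never attempts a Perron-mass concentration argument; instead it applies the Tutte--Berge formula to obtain a set $B$ with $G-B$ a union of odd components, uses switching operations $u\to v$ (Proposition~\ref{PROP:p1}) to force $G[B]$ to be complete multipartite and all but one odd component to be singletons (Claims~\ref{2.1} and~\ref{2.2}), and then in the hard case --- one surviving component $A_1$ of size $a_1\ge 3$ --- embeds $G$ into $\tilde G=K_{b_1,\dots,b_{k-1}}\vee(\overline{K_a}\cup K_{a_1})$ and proves $\lambda(\tilde G)<\lambda(G_k(n,s))$ by an explicit $(k+1)\times(k+1)$ equitable quotient-matrix computation, shifting two vertices at a time from $A_1$ into the other parts and checking monotonicity of the largest root of the characteristic polynomial. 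The hypothesis $n\ge 4s^2+9s$ is consumed entirely inside those polynomial inequalities.

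The gap in your outline is precisely the step you flag yourself: passing from the $2s$-vertex matching cover $B$ down to an $s$-vertex set $W$ on which the Perron vector concentrates. You offer no mechanism for this, and the bound $x_v\le 2s/\lambda$ for $v\notin B$ does not by itself prevent $s+1$ or more vertices of $B$ from each carrying $x$-value bounded below by a positive constant depending on $s$. This is the whole difficulty; the paper's Case~2 handles exactly the obstruction you must rule out (a factor-critical block of size $a_1\ge 3$ attached to a set $B$ of size $b<s$) by computation rather than by any concentration principle, and it is not clear your route can avoid a comparable calculation. Two smaller issues: once $G=G[W]\vee\overline{K_{n-s}}$, the $K_{k+1}$-freeness of $G$ gives that $G[W]$ is $K_k$-free (not merely $K_{k+1}$-free), and you then need that $\lambda(H\vee\overline{K_{n-s}})$ over $K_k$-free $H$ on $s$ vertices is maximised at $H=T_{k-1}(s)$ --- true, but not ``by Tur\'an's theorem'' directly (the paper gets it via edge-maximality plus Lemma~\ref{comp partite}); and in your connectedness sketch the perfect-matching branch of the dichotomy is spurious --- if no cross-component edge can be added safely then every vertex of every component is Gallai--Edmonds inessential, so every component is factor-critical, and the other alternative does not arise.
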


The rest of the note is arranged as follows. We give some preliminaries and lemmas. The proof of Theorem~\ref{main} will be given in Section 2. We give some discussion in the last section.

\section{Preliminaries and lemmas}
The Tutte-Berge Theorem~\cite{TB} (also see the Edmonds-Gallai Theorem~\cite{EG}) is very useful when we cope with the problem related to matching number.
\begin{lem}[\cite{TB},\cite{EG}]\label{Tutte-Berge}
	A graph $G$ is $M_{s+1}$-free if and only if there is a set $B\subset V(G)$ such that  all the components $G_1,\ldots, G_m$ of $G-B$ are odd (i.e. $|V(G_i)|\equiv1\pmod2$ for $i\in[m]$), and 
	$$|B|+\sum_{i=1}^{m}\frac{|V(G_i)|-1}{2}=s\mbox{.}$$
\end{lem}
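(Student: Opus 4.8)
The plan is to prove the two directions separately, viewing the statement as a repackaging of the Tutte--Berge deficiency formula; throughout let $\nu(G)$ denote the maximum number of edges in a matching of $G$, so that $G$ being $M_{s+1}$-free is exactly the condition $\nu(G)\le s$. The sufficiency direction is elementary. Suppose a set $B$ with the stated properties exists and let $M$ be any matching of $G$. Since $G-B$ has no edge between distinct components $G_1,\dots,G_m$, every edge of $M$ is either incident with $B$ or lies entirely inside some $G_i$. The edges incident with $B$ number at most $|B|$ because they use distinct vertices of $B$, while the edges of $M$ inside a fixed $G_i$ form a matching of an $|V(G_i)|$-vertex graph and hence number at most $\lfloor |V(G_i)|/2\rfloor=(|V(G_i)|-1)/2$, using that $|V(G_i)|$ is odd. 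Summing gives $|M|\le |B|+\sum_{i=1}^m (|V(G_i)|-1)/2=s$, so $\nu(G)\le s$ and $G$ is $M_{s+1}$-free.

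For the necessity direction I would start from the Tutte--Berge formula, which I take as given (it is the content of the cited references): writing $o(G-U)$ for the number of odd components of $G-U$, one has $n-2\nu(G)=\max_{U\subseteq V(G)}\big(o(G-U)-|U|\big)$, where $n=|V(G)|$. Fix a set $U^\ast$ attaining this maximum. The key observation is a \emph{clean-up} operation that makes all components odd without destroying optimality: if $G-U^\ast$ has an even component $H$, move any one vertex $v\in H$ into the set. Then $H$, which contributed no odd component, is replaced by the components of $H-v$, whose orders sum to the odd number $|H|-1$, so at least one of them is odd; thus $o$ does not decrease while $|U|$ increases by $1$, and $o(G-U)-|U|$ cannot increase, hence stays equal to the maximum. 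Repeating this (it terminates since $|U|$ only grows) yields a set $B$ with $o(G-B)-|B|=n-2\nu(G)$ and with \emph{all} components of $G-B$ odd. For such a $B$, if $G_1,\dots,G_m$ are the components then $m=o(G-B)$ and $\sum_{i=1}^m(|V(G_i)|-1)/2=\tfrac12(n-|B|-m)$, so $|B|+\sum_{i=1}^m(|V(G_i)|-1)/2=\tfrac12\big(n-(o(G-B)-|B|)\big)=\nu(G)$.

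This already proves the statement when $s=\nu(G)$. To reach an arbitrary target $s$ with $\nu(G)\le s\le n$ (the relevant range, since $M_{s+1}$-freeness forces $\nu(G)\le s$), I would pad $B$ by a $+1$ step that preserves the all-odd property and raises the left-hand side by exactly one. Indeed, as long as $G-B$ is nonempty pick a component $G_i$: if $|V(G_i)|=1$ move that isolated vertex into $B$, which deletes one odd component while increasing $|B|$ by one; if $|V(G_i)|\ge 3$ move into $B$ two successive non-cut vertices of the current component (every connected graph on at least two vertices has a non-cut vertex, namely a leaf of a spanning tree), shrinking $G_i$ by two while keeping it connected and odd. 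In either case $|B|+\sum(|V(G_i)|-1)/2$ goes up by exactly one, so iterating from the value $\nu(G)$ realizes every integer value up to the maximum $n$ (attained at $B=V(G)$), and in particular the desired $s$.

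The main obstacle is the necessity direction, and within it the honest work is the clean-up step: one must verify that forcing all components to be odd can be carried out at an optimal set without lowering $o(G-U)-|U|$, which is precisely the parity computation above. If instead one wished to avoid invoking Tutte--Berge, the genuine difficulty would shift to proving that deficiency formula itself (via augmenting paths or the Gallai--Edmonds decomposition), the classical hard core of the matching theory underlying the lemma. The padding argument, while needed to obtain the exact equality ``$=s$'' rather than merely ``$\le s$'', is routine.
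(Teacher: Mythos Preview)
The paper does not prove this lemma; it is quoted from the cited references (Berge's deficiency formula, with a pointer to the Gallai--Edmonds structure theorem) and used as a black box. So there is no in-paper proof to compare against.

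On its own terms your argument is correct. The sufficiency direction is the standard counting bound. For necessity you take a Tutte--Berge maximizer, clean it up so that every component of the complement is odd, and then pad to hit the exact value $s$; each of these steps is sound. One phrasing nit in the clean-up: after moving a vertex $v$ from an even component $H$ into $U$, you write that $o$ ``does not decrease'', but what you actually need---and what your parity observation in fact establishes---is that $o$ increases by at least one (since $H$ contributed zero odd components before and at least one after), because $|U|$ also went up by one and you want $o(G-U)-|U|$ not to drop below the maximum. Your caveat that the ``only if'' direction needs $s\le n$ is also right: as literally stated the lemma fails for $s>n$, though this is irrelevant in the paper's application where $n\ge 4s^2+9s$.
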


The following result is due to Esser and Harary~\cite{Friedrich Frank}.
\begin{lem}[\cite{Friedrich Frank}]\label{Friedrich Frank}
For any $k$-partite graph $K=K_{n_1,\cdots,n_k}$ of order $n$, the characteristic polynomial $\Phi_K(\lambda)$ is given by $$\Phi_K(\lambda)=\lambda^{n-k}\left(\prod\limits_{i=1}^k(\lambda+n_i)-\sum\limits_{i=1}^k n_i\prod\limits_{j=1,j\neq i}^k(\lambda+n_j)\right).$$ And the spectral radius of $K$ is the largest root of $1-\sum\limits_{i=1}^k\frac{n_i}{\lambda+n_i}=0$.
\end{lem}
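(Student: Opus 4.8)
The plan is to compute the characteristic polynomial $\Phi_K(\lambda)=\det(\lambda I - A)$ directly, exploiting the block structure that the adjacency matrix $A$ of $K=K_{n_1,\dots,n_k}$ inherits from the partition. Label the vertices so that $A$ is partitioned into $k\times k$ blocks, where the $(i,j)$ block is the $n_i\times n_j$ all-ones matrix $J$ for $i\neq j$ and the $n_i\times n_i$ zero matrix for $i=j$. The matrix $\lambda I-A$ then has diagonal blocks $\lambda I_{n_i}$ and off-diagonal blocks $-J_{n_i\times n_j}$. The key structural observation is that every off-diagonal block has rank one, so within each part all rows are identical modulo the diagonal $\lambda I$ term; this is exactly the source of the large eigenvalue $0$ with multiplicity $n-k$.

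First I would make the eigenvalue $0$ explicit. For each part $i$ of size $n_i$, any vector supported on that part whose entries sum to zero is annihilated by every off-diagonal all-ones block, hence is an eigenvector of $A$ with eigenvalue $0$; this yields $\sum_i(n_i-1)=n-k$ linearly independent eigenvectors for eigenvalue $0$, accounting for the factor $\lambda^{n-k}$. To pin down the remaining $k$ eigenvalues, I would pass to the quotient obtained by the equitable partition into the $k$ parts: an eigenvector constant on each part, with value $x_i$ on part $i$, satisfies $\lambda x_i=\sum_{j\neq i}n_j x_j$ for all $i$. Writing $B$ for the $k\times k$ quotient matrix whose $(i,j)$ entry is $n_j$ for $j\neq i$ and $0$ for $j=i$, the nontrivial spectrum of $A$ is precisely the spectrum of $B$, and $\Phi_K(\lambda)=\lambda^{n-k}\det(\lambda I_k - B)$. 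The main computation is therefore the $k\times k$ determinant $\det(\lambda I_k-B)=\det\bigl(\mathrm{diag}(\lambda+n_1,\dots,\lambda+n_k)-(n_1,\dots,n_k)^{\!\top}\otimes\mathbf{1}\bigr)$, where I have added and subtracted $n_i$ on the diagonal so that $\lambda I_k-B$ becomes a diagonal matrix $D=\mathrm{diag}(\lambda+n_1,\dots,\lambda+n_k)$ minus a rank-one correction $u\mathbf{1}^{\!\top}$ with $u=(n_1,\dots,n_k)^{\!\top}$.

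For this rank-one update I would apply the matrix determinant lemma, $\det(D-u\mathbf{1}^{\!\top})=\det(D)\bigl(1-\mathbf{1}^{\!\top}D^{-1}u\bigr)$, valid whenever $D$ is invertible. This gives
$$
\det(\lambda I_k-B)=\prod_{i=1}^k(\lambda+n_i)\left(1-\sum_{i=1}^k\frac{n_i}{\lambda+n_i}\right)
=\prod_{i=1}^k(\lambda+n_i)-\sum_{i=1}^k n_i\prod_{j\neq i}(\lambda+n_j),
$$
which is exactly the claimed bracketed factor; multiplying by $\lambda^{n-k}$ yields $\Phi_K(\lambda)$. The formula extends to all $\lambda$ (including the roots $\lambda=-n_i$ of the diagonal) by continuity, since both sides are polynomials.

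Finally I would identify the spectral radius. Since $K$ is connected (all $n_i\ge1$, $k\ge2$), Perron-Frobenius guarantees that $\lambda(K)$ is a simple positive eigenvalue with a strictly positive eigenvector; such an eigenvector is necessarily constant on each part, so $\lambda(K)$ arises as a root of the bracketed factor rather than from the $\lambda^{n-k}$ piece. Dividing that factor by $\prod_i(\lambda+n_i)>0$ for $\lambda>0$, the equation $\Phi_K(\lambda)/\lambda^{n-k}=0$ is equivalent to $1-\sum_{i=1}^k\frac{n_i}{\lambda+n_i}=0$; since $f(\lambda)=\sum_i n_i/(\lambda+n_i)$ is strictly decreasing on $(0,\infty)$ with $f(0^+)>1$ and $f(\lambda)\to0$, there is a unique positive root, which must be the largest root and hence $\lambda(K)$. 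The only delicate point is justifying that the Perron root does not coincide with any $-n_i$ and that no larger eigenvalue hides among the negative roots of the bracket; both follow from Perron-Frobenius positivity and the monotonicity of $f$, so I expect the rank-one determinant step to be the real content and these sign/monotonicity checks to be the main (though routine) obstacle.
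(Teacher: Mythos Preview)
Your argument is correct and complete. Note, however, that the paper does not prove this lemma at all: it is quoted as a known result of Esser and Harary~\cite{Friedrich Frank}, so there is no in-paper proof to compare against. Your approach via the equitable quotient and the matrix determinant lemma is the standard one and would serve perfectly well as a self-contained proof.

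One small slip worth fixing: the quotient matrix $B$ has $(i,j)$ entry $n_j$ for $i\neq j$, so $\lambda I_k-B$ equals $D-\mathbf{1}\,u^{\!\top}$ (column of ones times the row $(n_1,\dots,n_k)$), not $D-u\,\mathbf{1}^{\!\top}$ as you wrote. This does not affect the conclusion, since $\det(D-\mathbf{1}\,u^{\!\top})=\det(D-u\,\mathbf{1}^{\!\top})$ (they are transposes of each other) and the matrix determinant lemma gives the same factor $1-\sum_i n_i/(\lambda+n_i)$ either way.
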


The following lemma shows that for a complete multipartite graph the more balanced the graph is, the larger will the spectral radius be.

\begin{lem}\label{comp partite}
For any $k$-partite graph $K_{n_1,\cdots,n_k}$ of order $n$, if there exist $i$ and $j$ with $n_i-n_j\geq 2$, then $\lambda(K_{n_1,\cdots,n_i-1,\cdots,n_j+1,\cdots,n_k})>\lambda(K_{n_1,\cdots,n_i,\cdots,n_j,\cdots,n_k})$.
\end{lem}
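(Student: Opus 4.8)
The plan is to invoke Lemma~\ref{Friedrich Frank} and reduce the comparison of the two spectral radii to a comparison of the (largest) roots of a one‑variable rational equation. By relabelling the parts we may assume $i=1$, $j=2$, so $n_1\ge n_2+2$. Write $K=K_{n_1,n_2,n_3,\dots,n_k}$ and $K'=K_{n_1-1,\,n_2+1,\,n_3,\dots,n_k}$ (both are genuine complete $k$-partite graphs, since $n_1\ge 3$ forces all parts to stay nonempty), and for a partition $(m_1,\dots,m_k)$ put $f_{(m)}(\lambda)=1-\sum_{i=1}^k\frac{m_i}{\lambda+m_i}$. By Lemma~\ref{Friedrich Frank}, $\lambda(K)$ is a root of $f_{(n)}$ and $\lambda(K')$ is a root of $f_{(n')}$, where $n'=(n_1-1,n_2+1,n_3,\dots,n_k)$.

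First I would record two elementary facts about $g:=f_{(m)}$ on $(0,\infty)$: each summand $\frac{m_i}{\lambda+m_i}$ is positive and strictly decreasing in $\lambda$, so $g$ is continuous and strictly increasing on $(0,\infty)$; and $g(0^+)=1-k<0$ (as $k\ge 2$) while $g(\lambda)\to 1$ as $\lambda\to\infty$. Hence $g$ has a unique root in $(0,\infty)$, it is simple, and — since all remaining roots of the characteristic polynomial are $\le 0$ — it equals the spectral radius. Consequently, to prove $\lambda(K')>\lambda(K)$ it suffices to show $f_{(n')}(\lambda_0)<0$, where $\lambda_0:=\lambda(K)>0$: because $f_{(n')}$ is increasing on $(0,\infty)$, its unique positive root $\lambda(K')$ then lies strictly to the right of $\lambda_0$.

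The remaining computation is short and is the heart of the argument. Since $f_{(n)}(\lambda_0)=0$, subtracting gives
\[
f_{(n')}(\lambda_0)=\left(\frac{n_1}{\lambda_0+n_1}-\frac{n_1-1}{\lambda_0+n_1-1}\right)+\left(\frac{n_2}{\lambda_0+n_2}-\frac{n_2+1}{\lambda_0+n_2+1}\right).
\]
Using the identity $\frac{a}{\lambda+a}-\frac{a-1}{\lambda+a-1}=\frac{\lambda}{(\lambda+a)(\lambda+a-1)}$, the first bracket equals $\frac{\lambda_0}{(\lambda_0+n_1)(\lambda_0+n_1-1)}$ and the second equals $-\frac{\lambda_0}{(\lambda_0+n_2+1)(\lambda_0+n_2)}$. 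Since $n_1\ge n_2+2$ we have $\lambda_0+n_1>\lambda_0+n_2+1$ and $\lambda_0+n_1-1\ge\lambda_0+n_2+1>\lambda_0+n_2$, so $(\lambda_0+n_1)(\lambda_0+n_1-1)>(\lambda_0+n_2+1)(\lambda_0+n_2)>0$; together with $\lambda_0>0$ this yields $f_{(n')}(\lambda_0)<0$, as required.

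I do not expect a genuine obstacle here: the only point demanding a little care is the second paragraph, namely confirming that the relevant root of $f_{(m)}$ is positive, simple, and actually is the spectral radius (rather than a spurious root of the characteristic polynomial produced by Lemma~\ref{Friedrich Frank}); everything else is the one‑line algebraic identity above combined with $n_1\ge n_2+2$. If one prefers to avoid Lemma~\ref{Friedrich Frank} entirely, an alternative is a Rayleigh‑quotient argument with the Perron eigenvector of $K$, moving a vertex from part $i$ to part $j$ and estimating the change in $x^\top A x$, but the root‑comparison approach above is cleaner.
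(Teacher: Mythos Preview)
Your proof is correct but takes a different route from the paper. The paper argues via the Rayleigh quotient: it fixes the Perron eigenvector $\mathbf{x}$ of $K$, moves a single vertex $v$ from $V_i$ to $V_j$, and shows $\mathbf{x}^T(\tilde A-A)\mathbf{x}>0$ directly; to get the needed sign it first proves $\lambda(K)>n_j$ (using Lemma~\ref{Friedrich Frank}) and then uses the eigenvector identity $x_m=\bigl(\sum_\ell n_\ell x_\ell\bigr)/(\lambda+n_m)$. You instead compare the defining rational equations from Lemma~\ref{Friedrich Frank} head-on, evaluating $f_{(n')}$ at $\lambda_0=\lambda(K)$ and using monotonicity of $f_{(m)}$ on $(0,\infty)$; the one-line identity $\frac{a}{\lambda+a}-\frac{a-1}{\lambda+a-1}=\frac{\lambda}{(\lambda+a)(\lambda+a-1)}$ replaces the paper's eigenvector computation. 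Your approach is slightly more self-contained (it uses Lemma~\ref{Friedrich Frank} once rather than mixing it with a quadratic-form estimate), while the paper's Rayleigh-quotient template is the more portable one---it reappears in Proposition~\ref{PROP:p1} and elsewhere, where no closed-form for $\lambda$ is available. Amusingly, your closing remark anticipates exactly the method the paper actually uses.
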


\begin{proof}
Let $A$ and $\tilde{A}$ be adjacent matrices of $K=K_{n_1,\cdots,n_k}$ and  $\Tilde{K}=K_{n_1,\cdots,n_i-1,\cdots,n_j+1,\cdots,n_k}$, respectively, where $\Tilde{K}$ is obtained from $K$ by moving a vertex $v$ in the $i$-th part $V_i$ to the $j$-th part $V_j$. 
Let $\lambda=\lambda(K)$ and $\Tilde{\lambda}=\lambda(\Tilde{K})$. 
Let $\textbf{x}$ be a unit Perron vector of $A$. 
Note that all vertices in the same part of $K$ or $\Tilde{K}$ have the same corresponding components in its unit Perron vector. Denote the components corresponding to the vertices in the $\ell$-th part in $\bf{x}$ by $x_\ell$. 
Let $f(x)=\sum\limits_{\ell=1}^k\frac{n_\ell}{x+n_\ell}-1$. By Lemma~\ref{Friedrich Frank}, $\lambda$ is the largest root of $f(x)$. Clearly, $f(+\infty)=-1<0$ and $f(n_j)=\sum\limits_{\ell\neq i,j}^k\frac{n_\ell}{n_j+n_\ell}+\frac{n_i}{n_j+n_i}-\frac{1}{2}>0$. Hence, $\lambda>n_j$. Since $A\textbf{x}=\lambda \textbf{x}$, we have $\lambda x_m=\sum\limits_{\ell=1}^k n_\ell x_\ell-n_m x_m$, i.e.  $x_m=\sum\limits_{\ell=1}^k n_\ell x_\ell/(\lambda+n_m)$ for $m\in [k]$. Therefore,
\begin{eqnarray*}
{\bf{x}}^T(\Tilde{A}-A){\bf x}&=&\sum_{u\in V_i\setminus\{v\}}2x_ux_v-\sum\limits_{u\in V_j}2x_ux_v\\
                              &=&2(n_i-1)x_i^2-2n_jx_jx_i\\
                              &=&2x_i[(n_i-1)x_i-n_jx_j]\\
                              &=&2x_i\sum\limits_{\ell=1}^k n_\ell x_\ell\left( \frac{n_i-1}{\lambda+n_i}-\frac{n_j}{\lambda+n_j}\right)\\
                              &=&2x_i\sum\limits_{\ell=1}^k n_\ell x_\ell\frac{\lambda n_i-\lambda-n_j-\lambda n_j}{(\lambda+n_i)(\lambda+n_j)}\\
                              &>&0,
\end{eqnarray*}
the last inequality holds because $n_i\geq n_j+2$ and $\lambda>n_j$.
Therefore, we have $\Tilde{\lambda}>\lambda$.

\end{proof}

Let $M$ be an $n\times n$ real symmetric matrix with the following block form
$$M=\left(\begin{array}{ccc}
M_{11} & \cdots & M_{1 k} \\
\vdots & \ddots & \vdots \\
M_{k 1} & \cdots & M_{k k}
\end{array}\right).$$
For $1\leq i,j\leq k$, let $b_{ij}$ denote the average row sum of $M_{ij}$. The matrix $B=(b_{ij})$ is called the {\em quotient matrix} of $M$. Moreover, if for each pair $i,j$, $M_{ij}$ has a constant row sum, then $B$ is called the {\em equitable quotient matrix} of $M$.

\begin{lem}[\cite{quotmatrix}]\label{quotmatrix}
Let $M$ be an $n\times n$ real symmetric matrix and let $B$ be an equitable quotient matrix of $M$. If $M$ is nonnegative and irreducible, then $\lambda(M)=\lambda(B)$, where $\lambda(M)~and~\lambda(B)$ are the largest eigenvalues of $M$ and $B$, respectively.
\end{lem}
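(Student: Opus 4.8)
The plan is to use the characteristic (partition) matrix of the block structure together with the Perron--Frobenius theorem, which is the standard route for equitable-partition quotient results. Let $\{1,\dots,n\}$ be split into the $k$ blocks of sizes $n_1,\dots,n_k$ that define the partition of $M$, and let $S$ be the $n\times k$ \emph{characteristic matrix}, with $S_{v\ell}=1$ if index $v$ lies in the $\ell$-th block and $S_{v\ell}=0$ otherwise. First I would record the single identity that encodes equitability, namely
$$MS=SB.$$
Indeed, for $v$ in block $i$ the $(v,j)$-entry of $MS$ is the sum of row $v$ restricted to block $M_{ij}$, which equals $b_{ij}$ precisely because the partition is equitable (each $M_{ij}$ has constant row sum); on the other hand $(SB)_{vj}=b_{ij}$ as well. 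Note also that $S$ has full column rank $k$, and that the all-ones vector satisfies $\mathbf{1}=S\mathbf{1}_k\in\operatorname{col}(S)$.

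Next I would lift a Perron vector of $B$ to one of $M$. Since $M$ is nonnegative, the averaged entries of $B$ are nonnegative, and irreducibility of $M$ forces irreducibility of $B$: a strongly connected walk in the digraph of $M$ projects onto a walk between the corresponding blocks in the digraph of $B$, so the latter digraph is strongly connected. Applying Perron--Frobenius to $B$ yields a strictly positive vector $\mathbf{y}\in\mathbb{R}^k$ with $B\mathbf{y}=\lambda(B)\mathbf{y}$. Then $S\mathbf{y}$ is strictly positive, being constant equal to $y_i>0$ on block $i$, and the displayed identity gives
$$M(S\mathbf{y})=SB\mathbf{y}=\lambda(B)\,S\mathbf{y},$$
so $S\mathbf{y}$ is a componentwise positive eigenvector of $M$ with eigenvalue $\lambda(B)$.

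Finally I would invoke the uniqueness part of Perron--Frobenius for $M$: since $M$ is nonnegative and irreducible, the \emph{only} eigenvalue admitting a componentwise positive eigenvector is the spectral radius $\lambda(M)$ (pairing any such eigenvector against the positive left Perron vector forces its eigenvalue to equal $\lambda(M)$). As $S\mathbf{y}>0$ is an eigenvector for $\lambda(B)$, we conclude $\lambda(B)=\lambda(M)$, which is the claim.

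I do not expect a genuine obstacle here beyond careful bookkeeping; the only points requiring attention are verifying $MS=SB$ from the constant-row-sum hypothesis and confirming that irreducibility is inherited by $B$, so that the lifted eigenvector is strictly positive. That strict positivity is exactly what lets Perron--Frobenius identify $\lambda(B)$ with the top eigenvalue $\lambda(M)$ rather than with some smaller eigenvalue of $M$. If one wished to sidestep the inheritance of irreducibility, an alternative is to note that $\operatorname{col}(S)$ is an $M$-invariant subspace on which $M$ acts with spectrum equal to that of $B$, while its orthogonal complement is also invariant and, being orthogonal to $\mathbf{1}\in\operatorname{col}(S)$, cannot contain the positive Perron vector of $M$; this again pins $\lambda(M)$ inside $\operatorname{col}(S)$ and hence equal to $\lambda(B)$.
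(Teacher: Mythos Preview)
Your argument is correct and is the standard characteristic-matrix plus Perron--Frobenius proof. Note, however, that the paper does not give its own proof of this lemma at all: it is simply quoted from Godsil's \emph{Algebraic Combinatorics} \cite{quotmatrix}, so there is nothing in the paper to compare your approach against. Your write-up is essentially the textbook proof one would find there.
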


For two non-adjacent vertices $u,v$ in a graph $G$, we  define the {\em switching operation $u\rightarrow v$} as deleting the edges joining $u$ to its neighbors  and adding new edges connecting $u$ to the neighborhood of $v$. Let  $G_{u\rightarrow v}$ be the graph
obtained from $G$ by the switching operation $u\rightarrow v$, that is $V(G_{u\rightarrow v})=V(G)$ and
$$E(G_{u\rightarrow v})=\left(E(G)\setminus E_G(u, N_G(u))\right)\cup E_G(u, N_G(v)),$$
where $E_G(S, T)$ is the set of edges in $G$ with one end in $S$ and the other in $T$  for disjoint subsets $S, T\subset V(G)$.
Note that the edges between  $u$ and the common neighbors of $u$ and $v$ remain unchanged by the definition of $G_{u\rightarrow v}$.
For two disjoint independent sets $S$ and $T$ in a graph $G$, if all vertices in $S$ (resp. $T$) have the same neighborhood $N_G(S)$ (resp. $N_G(T)$), we similarly define $G_{S\rightarrow T}$ to be the graph obtained from $G$ by deleting the edges between $S$  and $N_G(S)$ and adding new edges connecting $S$ and $N_G(T)$. 
\begin{prop}\label{PROP:p1}
	
	For $r\ge 2$ and two disjoint independent sets $S$ and $T$ in a graph $G$, if all of vertices in $S$ (resp. $T$) have the same neighborhood $N_G(S)$ (resp. $N_G(T)$) and $E_G(S, T)=\emptyset$, then either $G'=G_{S\rightarrow T}$ or $G'=G_{T\rightarrow S}$ has the property that  $\lambda(G^{\prime}) \geq$ $\lambda(G)$.
\end{prop}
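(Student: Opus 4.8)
The plan is to use the standard Rayleigh-quotient comparison together with the fact that both candidate graphs share the same vertex set as $G$. Let $\mathbf{x}$ be a unit Perron vector of $G$ with $\lambda(G)=\mathbf{x}^{T}A(G)\mathbf{x}$. Since every vertex of $S$ has the same neighborhood $N_G(S)$, all components of $\mathbf{x}$ on $S$ that matter enter symmetrically; write $a=\sum_{u\in S}x_u$ and $b=\sum_{u\in T}x_u$, and let $p=\sum_{w\in N_G(S)}x_w$, $q=\sum_{w\in N_G(T)}x_w$. Because $E_G(S,T)=\emptyset$, no edge of $G$ lies between $S$ and $T$, so the only edges of $G$ incident to $S\cup T$ contribute exactly $2ap$ (edges from $S$ to $N_G(S)$) and $2bq$ (edges from $T$ to $N_G(T)$) to $\mathbf{x}^{T}A(G)\mathbf{x}$; all remaining edges of $G$ are also present in both $G_{S\rightarrow T}$ and $G_{T\rightarrow S}$ and contribute the same amount in all three graphs.

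Next I would compute the quadratic forms of the two switched graphs against the \emph{same} vector $\mathbf{x}$. In $G_{S\rightarrow T}$ the edges incident to $S\cup T$ contribute $2aq+2bq=2(a+b)q$, while in $G_{T\rightarrow S}$ they contribute $2ap+2bp=2(a+b)p$. Hence
\begin{align*}
\mathbf{x}^{T}A(G_{S\rightarrow T})\mathbf{x}-\mathbf{x}^{T}A(G)\mathbf{x}&=2(a+b)q-2ap-2bq=2\bigl(aq-ap\bigr)+2\bigl(bq-bq\bigr)=2a(q-p)+0,\\
\mathbf{x}^{T}A(G_{T\rightarrow S})\mathbf{x}-\mathbf{x}^{T}A(G)\mathbf{x}&=2(a+b)p-2ap-2bq=2b(p-q).
\end{align*}
Since $a,b\ge 0$ (Perron vector entries are nonnegative), at least one of $q-p$ and $p-q$ is nonnegative; if $q\ge p$ take $G'=G_{S\rightarrow T}$, otherwise take $G'=G_{T\rightarrow S}$. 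In either case the chosen graph $G'$ satisfies $\mathbf{x}^{T}A(G')\mathbf{x}\ge \mathbf{x}^{T}A(G)\mathbf{x}=\lambda(G)$, and since $\mathbf{x}$ is a unit vector, $\lambda(G')\ge \mathbf{x}^{T}A(G')\mathbf{x}\ge\lambda(G)$ by the Rayleigh principle.

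One point that needs a little care is the bookkeeping of edges \emph{within} $S$ or within $T$: the hypothesis says $S$ and $T$ are independent sets, so there are none, and the switching operation does not create any (it only redirects edges from $S$, or from $T$, to a prescribed external neighborhood). Another subtlety is whether $N_G(S)$ and $N_G(T)$ overlap or meet $S\cup T$; since $S$ is independent, $N_G(S)\cap S=\emptyset$, and $N_G(S)$ may intersect $N_G(T)$ or even contain vertices of $T$, but this does not affect the computation above because we only grouped edges by which of $S$, $T$ they are incident to, using $E_G(S,T)=\emptyset$ to guarantee these two groups are disjoint. The main (mild) obstacle is thus purely notational: making sure the contribution $2ap$, $2bq$ is counted once and not double-counted when $N_G(S)$ and $N_G(T)$ share vertices; grouping the sum of the quadratic form by \emph{edges incident to $S$} and \emph{edges incident to $T$} (which partition cleanly since $E_G(S,T)=\emptyset$ and $S,T$ are independent) resolves this. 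Note the hypothesis ``$r\ge 2$'' plays no role in this argument and is presumably a leftover from the intended application.
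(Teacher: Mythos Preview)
Your proof is correct and follows essentially the same approach as the paper's: compare the Rayleigh quotient of the switched graph against the Perron vector $\mathbf{x}$ of $G$, and choose $G_{S\rightarrow T}$ or $G_{T\rightarrow S}$ according to which of $\sum_{w\in N_G(T)}x_w$ and $\sum_{w\in N_G(S)}x_w$ is larger. One harmless slip: since $E_G(S,T)=\emptyset$, the set $N_G(S)$ cannot contain vertices of $T$, so that parenthetical case never arises; and you are right that the hypothesis $r\ge 2$ is unused.
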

\begin{proof}
	Let $S$ and $T$ be two such independent sets of $G$. Let  $\textbf{x}$ be a unit Perron vector of $A(G)$.  Without loss of generality, suppose $\sum\limits_{z\in N_{G}(T)}x_z\geq \sum\limits_{z\in N_{G}(S)}x_z$. Let $G^{\prime}=G_{S \rightarrow T}$. Then 
	\begin{eqnarray*}
	{\bf{x}}^T(A(G^{\prime})-A(G)){\bf x}&=&\sum\limits_{u\in S}\sum\limits_{z\in N_{G}(T)}2 x_u x_z -  \sum\limits_{u\in S}\sum\limits_{z\in N_{G}(S)}2 x_u x_z\\
	                                 &=& 2 \sum\limits_{u\in S}x_u \left(\sum\limits_{z\in N_{G}(u)}x_z -  \sum\limits_{z\in N_{G}(v)}x_z\right)\geq 0.
	\end{eqnarray*}
Therefore, we have $\lambda(G^{\prime}) \geq$ $\lambda(G)$.
\end{proof}

\section{Proof of Theorem~\ref{main}}
Now we are ready to give the proof of the main theorem.
\begin{proof}[Proof of Theorem~\ref{main}]
Suppose $n\geq 4s^2+9s$. Let $G$ be an  $n$-vertex graph with maximum spectral radius over all $\{K_{k+1},M_{s+1}\}$-free graphs. 
Let $\lambda=\lambda(G)$ and $\textbf{x}$ be a unit Perron vector of $A(G)$. 
We show that $\lambda(G)\le\lambda(G_k(n,s))$. 

Since $G$ is $M_{s+1}$-free, by Lemma~\ref{Tutte-Berge}, there is a vertex set $B\subset V(G)$ such that  $G-B$ consists of odd components $G_1,\ldots, G_m$, and 
\begin{equation}\label{EQ: e1}
|B|+\sum_{i=1}^{m}\frac{|V(G_i)|-1}{2}=s\mbox{.}
\end{equation}
Let $A_i=V(G_i)$ and $|A_i|=a_i$ for $i\in[m]$. Denote $A=\cup_{i=1}^mA_i$. 
Let $I_G(A)=\{ i\in[m] : a_i=1\}$. We may choose $G$ maximizing $|I_G(A)|$ (assumption (*)). 
Let $|B|=b$. 
Then we have $b\leq s$ and $a_i\leq 2s+1$.

Define two vertices $u$ and $v$ in  $B$ are equivalent if and only if $N_G(u)=N_G(v)$. Clearly, it is an equivalent relation. Therefore, the vertices of $B$ can be partitioned into equivalent classes according to the equivalent relation defined above.  We may choose $G$ (among graphs $G$ satisfying assumption (*)) with the minimum number of equivalent classes of $B$ (assumption (**)).  
Note that each equivalent class of $B$ is an independent set of $G$ by the definition of the equivalent relation. 
We first claim that every two non-adjacent vertices of $B$ have the same neighborhood  (a spectral version of Lemma 2.1 in~\cite{AF}),
for completeness we include the proof.

\begin{claim}\label{2.1}
Every two non-adjacent vertices of $B$ have the same neighborhood.
\end{claim}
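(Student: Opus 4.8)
The goal is to show that if $u, v \in B$ are non-adjacent, then $N_G(u) = N_G(v)$. The natural tool is Proposition \ref{PROP:p1} together with the extremality of $G$. I would argue by contradiction, and the key point is to track how the switching operations interact with the three invariants we have fixed: $\{K_{k+1}, M_{s+1}\}$-freeness, the quantity $|I_G(A)|$ (assumption (*)), and the number of equivalence classes of $B$ (assumption (**)).

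First I would set up the switching. Take $u, v$ non-adjacent in $B$. We want to apply Proposition \ref{PROP:p1} with $S = \{u\}$ and $T = \{v\}$ (singletons are independent, vacuously have constant neighborhoods, and $E_G(S,T) = \emptyset$ since $u \not\sim v$). The proposition gives a graph $G' \in \{G_{u \to v}, G_{v \to u}\}$ with $\lambda(G') \ge \lambda(G)$. Say WLOG $G' = G_{u \to v}$, so in $G'$ the vertex $u$ has the same neighborhood as $v$. I need to verify that $G'$ is still $\{K_{k+1}, M_{s+1}\}$-free. For $M_{s+1}$-freeness: the set $B$ still works as a Tutte–Berge witness, because $G' - B = G - B$ (switching a vertex of $B$ only changes edges incident to $u \in B$, leaving $G - B$ untouched), so the components of $G' - B$ are the same odd components $G_1, \dots, G_m$ and equation \eqref{EQ: e1} still holds — hence $G'$ is $M_{s+1}$-free with the same $B$, the same $A_i$'s, and in particular $|I_{G'}(A)| = |I_G(A)|$, so $G'$ still satisfies (*). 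For $K_{k+1}$-freeness: a new clique in $G'$ must use $u$ with its new neighborhood $N_G(v)$; but then replacing $u$ by $v$ (possible since $u, v$ are non-adjacent in $G'$ as well, having the same neighborhood which excludes each other) yields a $K_{k+1}$ in $G$ — contradiction. Actually one must be slightly careful: a $K_{k+1}$ in $G'$ through $u$ corresponds to a $K_k$ in $N_G(v)$, which together with $v$ forms a $K_{k+1}$ in $G$. So $G'$ is $K_{k+1}$-free.

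Now the extremality kicks in. Since $\lambda(G') \ge \lambda(G)$ and $G$ was chosen with maximum spectral radius, we get $\lambda(G') = \lambda(G)$, so $G'$ is also an extremal graph satisfying (*). But in $G'$, the vertices $u$ and $v$ now lie in the same equivalence class of $B$ (they have identical neighborhoods), whereas all other equivalence classes are unchanged — strictly, any merging of classes only decreases the count, and since $u, v$ were in different classes in $G$ (they had different neighborhoods, as we assumed $N_G(u) \ne N_G(v)$) while they coincide in $G'$, the number of equivalence classes of $B$ in $G'$ is strictly smaller than in $G$. This contradicts assumption (**). Hence no such non-adjacent pair with distinct neighborhoods exists, proving the claim.

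The main obstacle I anticipate is the bookkeeping in the $K_{k+1}$-freeness verification — one has to make sure the switched vertex $u$ cannot create a clique in combination with its new neighbors, and the clean way is the "replace $u$ by $v$" argument above, which works precisely because $u \not\sim v$ in $G$. A secondary subtlety is confirming that switching genuinely reduces the number of equivalence classes rather than leaving it equal: this needs the hypothesis $N_G(u) \ne N_G(v)$ (the case being ruled out), so that $u$ genuinely changes class, and the observation that the switch touches only edges at $u$, so no other vertex changes its neighborhood and hence no class other than $u$'s can split or merge. Everything else is a direct application of the already-established Proposition \ref{PROP:p1} and the Tutte–Berge structure.
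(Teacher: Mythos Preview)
Your argument follows the paper's strategy closely, but there is a genuine gap in the final step where you claim the number of equivalence classes of $B$ strictly decreases. The assertion ``the switch touches only edges at $u$, so no other vertex changes its neighborhood'' is false: an edge at $u$ is also an edge at the other endpoint, so every vertex in $N_G(u)\triangle N_G(v)$ has its neighborhood changed by the switch. More importantly, if the equivalence class $U$ of $u$ has $|U|\ge 2$, then after $G_{u\to v}$ the vertices of $U\setminus\{u\}$ retain the neighborhood $N_G(u)$ while $u$ alone acquires $N_G(v)$; thus $U$ splits into $\{u\}$ and $U\setminus\{u\}$. The merge of $\{u\}$ with $W$ is then exactly cancelled by this split, and the total number of classes can stay the same. (Concretely: with $B=\{u,u'',v,c\}$, $N_G(u)=N_G(u'')=\{c,a_1\}$, $N_G(v)=\{c,a_2\}$, $N_G(c)=\{u,u'',v,a_3\}$, the three classes $\{u,u''\},\{v\},\{c\}$ become $\{u,v\},\{u''\},\{c\}$ after $u\to v$.) So assumption~(**) is not contradicted, and the proof does not close.

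The paper avoids this by applying Proposition~\ref{PROP:p1} not to the singletons $\{u\},\{v\}$ but to the full equivalence classes $S=U$, $T=W$; since $uv\notin E(G)$ forces $E_G(U,W)=\emptyset$, the proposition applies. After $G_{U\to W}$ every vertex of $U$ has neighborhood $N_G(v)$, so $U$ and $W$ merge into a single class and no original class splits (vertices within any class $C$ share the same adjacencies to all of $U$, hence are modified identically). The class count therefore drops strictly, yielding the desired contradiction. Your verification of $\{K_{k+1},M_{s+1}\}$-freeness and of $|I_{G'}(A)|=|I_G(A)|$ carries over verbatim to this setting, so the only change needed is to switch the whole class rather than a single vertex.
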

\begin{proof}
Suppose there are two non-adjacent vertices $u, w\in B$ with $N_{G}(u)\neq N_{G}(w)$. Then $u$ and $w$ must be in different equivalent classes $U$ and $W$ by the definition of the equivalence. Since $uw\notin E(G)$, we have $E_G(U, W)=\emptyset$. Without loss of generality,  suppose $\sum\limits_{z\in N_{G}(w)}x_z\geq \sum\limits_{z\in N_{G}(u)}x_z$. 
Let $G'=G_{U\rightarrow W}$. By Proposition~\ref{PROP:p1}, $\lambda(G')\ge \lambda(G)$. Now we show that $G'$ is $\{K_{k+1}, M_{s+1}\}$-free too. Clearly,  $G'-B$ still consists of odd components $G_1,\ldots, G_m$. 
Hence $G'$ is $M_{s+1}$-free by  Lemma~\ref{Tutte-Berge}. 
If $G'$ contains a copy $T$ of $K_{k+1}$, we must have a vertex $u'\in V(T)\cap U$. 
Since $N_{G'}(u')=N_{G'}(w)=N_G(w)$,  $(V(T)\setminus\{u'\})\cup\{w\}$ induces a copy of $K_{k+1}$ in $G$, a contradiction. Hence, $G_{U\rightarrow W}$ is $\{K_{k+1}, M_{s+1}\}$-free. 
By the extremality of $G$, we have $\lambda(G')= \lambda(G)$. 
But the number of equivalent classes of $G'$ ($U$ and $W$  merge into one class in $G'$) is less than the one in $G$, a contradiction to the assumption (**).

\end{proof}


By Claim~\ref{2.1} and $G$ is $K_{k+1}$-free, $G[B]$ is a complete $\ell$-partite graph with $\ell\le k$. Let its partition sets be $B_1, \ldots, B_\ell$ and let $B_{\ell+1}=\cdots=B_k=\emptyset$ if $\ell<k$. Let $b_i=|B_i|$ for $i\in[k]$. 
Without loss of generality, 
assume that $\sum\limits_{v\in B_1}x_v\geq\cdots\geq\sum\limits_{v\in B_k}x_v$. 
By Claim~\ref{2.1}, if there is a vertex in $B_i$ adjacent to $v\in A_j$ then $B_i\subseteq N_G(v)$. 


\begin{claim}\label{2.2}
$a_2=a_3=\cdots=a_m=1$.
\end{claim}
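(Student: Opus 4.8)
The plan is to argue by contradiction. Relabelling the components if necessary, suppose $a_1\ge a_2\ge\cdots\ge a_m$ with $a_1,a_2\ge 3$ (an odd component of size $>1$ has size at least $3$). I will produce from $G$ a $\{K_{k+1},M_{s+1}\}$-free graph $G'$ on $V(G)$ with $\lambda(G')\ge\lambda(G)$ and strictly more singleton components than $G$; then $\lambda(G')>\lambda(G)$ contradicts the extremality of $G$, while $\lambda(G')=\lambda(G)$ makes $G'$ contradict assumption~$(*)$.

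I would begin with three observations. (i) Since $K_{s,n-s}\subseteq G_k(n,s)$ is $\{K_{k+1},M_{s+1}\}$-free, extremality gives $\lambda\ge\lambda(G_k(n,s))\ge\sqrt{s(n-s)}$, and $n\ge 4s^2+9s$ forces $\lambda>3s$; hence for $v\in A_i$, $\lambda x_v=\sum_{u\sim v}x_u\le(a_i-1+b)\max_w x_w\le 3s\max_w x_w$, so $\max_w x_w$ is attained on $B$, and a similar computation (using that a component-vertex sees at most $a_i-1\le 2s$ Perron weight inside its component and, by Claim~\ref{2.1}, at most $\sum_{v\in B}x_v$ inside $B$) bounds $\max_{v\in A}x_v$ by a quantity of order $s/\lambda$ times $y^{*}$, where $y^{*}$ is the common Perron weight of a heaviest part of $B$; in particular the total weight on the internal edges of the components, $\sum_i\sum_{uv\in E(G_i)}x_ux_v$, is of order $s^{3}/\lambda^{2}$ times $(y^{*})^{2}$. (ii) If $B=\emptyset$ then $\lambda=\max_i\lambda(G_i)\le\max_i(a_i-1)\le 2s<\lambda$, absurd; so $B\ne\emptyset$ and $B_1\ne\emptyset$. (iii) A component disjoint from $B$ is a connected component of $G$ with $\lambda(G_i)\le 2s<\lambda$, hence of zero Perron weight; replacing its vertices by singletons adjacent to $B_1,\dots,B_{k-1}$ keeps $G$ $\{K_{k+1},M_{s+1}\}$-free and, by Perron--Frobenius, strictly increases $\lambda$, a contradiction — so every component meets $B$.

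The construction: choose $w\in A_2$ of maximum Perron weight and one further vertex $u$ (in the cut-vertex case of $w$, a bounded number of further vertices) so that deleting all edges at $u,w$ leaves $A_2$ split into odd components with the Tutte--Berge parameter~\eqref{EQ: e1} unchanged; generically $G_2-\{u,w\}$ is connected, of the odd size $a_2-2$. Let $G'$ be $G$ with all edges at $u,w$ deleted, $w$ put into a heaviest part $B_{j^{*}}$ of $B$ (so $w$ acquires the neighbourhood $N_G(B_{j^{*}})$ common to that part), and $u$ made a singleton adjacent to $B_1,\dots,B_{k-1}$; put $B'=B\cup\{w\}$. Then $G'-B'$ has components $G_1$, $G[A_2\setminus\{u,w\}]$, $G_3,\dots,G_m$, $\{u\}$, all odd, with
\[
|B'|+\sum_i\frac{|V(G'_i)|-1}{2}=(b+1)+\Bigl(s-b-\tfrac{a_2-1}{2}\Bigr)+\tfrac{a_2-3}{2}=s,
\]
so $G'$ is $M_{s+1}$-free by Lemma~\ref{Tutte-Berge}; it is $K_{k+1}$-free since $G'[B']$ is complete multipartite with at most $k$ parts, any clique through $u$ uses at most $k-1$ of $B_1,\dots,B_{k-1}$, and for any clique $\{w\}\cup C$ of $G'$ every vertex of $C$ is adjacent to $B_{j^{*}}$ already in $G$, so $C\cup\{z\}$ (any $z\in B_{j^{*}}$) is a clique of $G$ and $|C|\le k-1$. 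And $G'$ has strictly more singletons than $G$. By the Rayleigh quotient $\lambda(G')\ge\mathbf{x}^{\top}A(G')\mathbf{x}=\lambda+\mathbf{x}^{\top}\bigl(A(G')-A(G)\bigr)\mathbf{x}$; using $A\mathbf{x}=\lambda\mathbf{x}$, the vertex $w$ contributes essentially $2\lambda x_w(y^{*}-x_w)\ge 0$ (as $x_w\le\max_{v\in A}x_v<y^{*}$), the other relocated vertices contribute likewise, and the only loss, from the few vertices demoted to singletons, is of order $s(\max_{v\in A}x_v)^{2}$. I expect the main obstacle to be exactly this last quantitative comparison: one must sharpen the Perron-weight estimates on the components — and bound the number of vertices that must be demoted when $w$ is a cut vertex of $G_2$ — precisely enough that $2\lambda x_w(y^{*}-x_w)$ beats that loss, which, via $x_w=\max_{v\in A_2}x_v$, the bound on $\max_{v\in A}x_v$, and $\lambda^{2}\ge s(n-s)\ge s(4s^{2}+8s)$, is where the hypothesis $n\ge 4s^2+9s$ enters.
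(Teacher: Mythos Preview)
Your plan is considerably more complicated than needed, and the step you yourself flag as the ``main obstacle'' is a genuine gap: you have not shown that the Rayleigh gain $2\lambda x_w(y^{*}-x_w)$ dominates the combined losses (from relocating $u$, from possible double-counting when $uw\in E(G)$, and from the cut-vertex case where more than two vertices must be relocated), and the order-of-magnitude sketch you give does not close it. Moving $w$ into $B$ also creates unnecessary bookkeeping (you must re-verify the multipartite structure of $G'[B']$ and worry about whether $u\in N_G(B_{j^*})$, etc.).

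The paper avoids all of this by keeping $B$ fixed and switching \emph{within} $A$. The crucial choice is to pick $v_1\in A$ maximizing the \emph{neighbour-weight sum} $\sum_{u\in N_G(v_1)}x_u$ (not the Perron entry itself), and relabel so that $v_1\in A_1$. Then for any $u\in A\setminus A_1$ the switching $u\to v_1$ satisfies $\lambda(G_{u\to v_1})\ge\lambda(G)$ automatically by Proposition~\ref{PROP:p1}, with no estimates whatsoever. If $|E(G[A_1])|=0$ one switches every $u\in A\setminus\{v_1\}$ to $v_1$ and all of $A$ becomes singletons. If $a_1\ge 3$, take any $A_j$ ($j\ge 2$) with $a_j\ge 3$, choose two leaves $u_1,u_2$ of a spanning tree of $G[A_j]$ so that $G[A_j\setminus\{u_1,u_2\}]$ remains connected, and switch $u_1,u_2\to v_1$; the Tutte--Berge identity~\eqref{EQ: e1} is preserved since $A_1$ gains two vertices and $A_j$ loses two (both parities stay odd), $K_{k+1}$-freeness holds because any new clique through $u_i$ transfers to one through $v_1$ in $G$, and iterating $(a_j-1)/2$ times reduces $A_j$ to a singleton, contradicting~$(*)$.

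The point you missed is that targeting the vertex of maximum neighbour-weight sum makes $\lambda(G')\ge\lambda(G)$ immediate from the one-line Rayleigh computation in Proposition~\ref{PROP:p1}; there is no need for Perron-weight bounds, no use of $n\ge 4s^2+9s$, and no need to enlarge $B$.
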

\begin{proof}
Suppose $v_1$ is a vertex in $A$ with $\sum\limits_{u\in N_G(v_1)}x_u=\max\limits_{v\in A}\sum\limits_{u\in N_G(v)}x_u$. Without loss of generality, suppose $v_1\in A_1$. We prove by contradiction. Suppose there is an $a_i$ with $a_i\not=1$ for some $2\le i\le m$.

If $|G[A_1]|=0$, let $G'$ be the resulting graph by applying the switching operations $u\rightarrow v_1$ for all vertices $u\in A\setminus\{v_1\}$ one by one. Then we have $|G'[A]|=0$. By Proposition~\ref{PROP:p1}, $\lambda(G')\ge\lambda(G)$. With the same discussion as in the proof of Claim~\ref{2.1}, we have that $G'$ is still $\{K_{k+1},M_{s+1}\}$-free. But $|I_{G'}(A)|=m>|I_G(A)|$, a contradiction to the assumption (*).

If $|G[A_1]|>0$, i.e. $a_1\geq 3$. 
Without loss of generality, assume $a_2\ge 3$. 
Since $G[A_2]$ is connected, we can pick two vertices, say $u_1,u_2$ in $A_2$ such that $G[A_2\backslash\{u_1,u_2\}]$ is still connected ($u_1, u_2$ exist, for example, we can pick two leaves of a spanning tree of $G[A_2]$). 
Let $G_1$ be the resulting graph by applying the switching operations $u_1\rightarrow v_1$ and $u_2\rightarrow v_1$ one by one. 
With similar discussion as in the above case, we have $\lambda(G_1)\ge\lambda(G)$ and $G_1$ is $\{K_{k+1},M_{s+1}\}$-free. 
Continue the process after $t=\frac{a_2-1}{2}$ steps, we obtain a graph $G_t$ with $\lambda(G_t)\ge\lambda(G)$ and $G_t$ is $\{K_{k+1},M_{s+1}\}$-free. But $|I_{G_t}(A)|=|I_G(A)|+1$, a contradiction to the assumption (*). 
\end{proof}

The following proof is divided into two cases according to $a_1$.
\begin{case}
$a_1=1$. 
\end{case}
In this case $b=s$ by (\ref{EQ: e1}). By Claim~\ref{2.2}, $A$ is an independent set of $G$. Let $A=\{v_1,\ldots,v_m\}$. 

If $b_k= 0$, then $G[B]$ is a complete $\ell$-partite graph on $s$ vertices with $\ell\le k-1$. We may assume $\ell=k-1$ (Otherwise, we can add new edges in $G[B]$ to make it $(k-1)$-partite and this operation will increase the spectral radius of $G$ by the Perron-Frobenius Theorem, a contradiction to the maximality of $G$). With the same reason, we can add all missing edges between sets $A$ and $B$ to make $G$ a complete $k$-partite graph.  
Now by Lemma~\ref{comp partite}, we have $\lambda(G)\le \lambda(G_k(n,s))$, and the equality holds if and only if $G\cong G_k(n,s)$.

If $b_k\neq 0$, then $G[B]$ is a complete $k$-partite graph on $s$ vertices.
Since $G$ is $K_{k+1}$-free, each vertex in $A$ is only adjacent to $k-1$ parts in $B$.
By the assumption $\sum\limits_{v\in B_1}x_v\geq\cdots\geq\sum\limits_{v\in B_k}x_v$ and the maximality of $G$, we may assume every vertex of $A$ is adjacent to $B_1,\cdots,B_{k-1}$ (the only possible exception is when $\sum\limits_{v\in B_1}x_v=\cdots=\sum\limits_{v\in B_k}x_v$, in this case, we can relabel $B_1, \ldots, B_k$ and do switching operations in vertices of $A$ to  obtain a new graph with the non-decrease spectral radius and the desired property).
Now combine $B_k$ and $A$ as one part, we  obtain that $G$ is a complete $k$-partite graph. Since $\sum\limits_{i=1}^{k-1}b_i\leq s-1$, by Lemma~\ref{comp partite}, we have $\lambda(G)<\lambda(G_k(n,s))$. This completes the proof of the case.

\begin{case}
    $a_1\geq 3$
\end{case}
In this case $b+\frac{a_1-1}2=s$. Since $G$ is $K_{k+1}$-free and has maximum spectral radius, we also can assume that $G[B]$ is a complete $\ell$-partite graph with $\ell=k-1$ or $k$ and each vertex in $A$ is only adjacent to the first $k-1$ parts in $G[B]$. Now let  $\tilde{A}=B_k\cup(A\setminus A_1)$ and $a=|\tilde{A}|$. Then $\tilde{A}$ is an independent set of $G$. 
To finish the proof, we will show that $\lambda(G)<\lambda(G_k(n,s))$. 
To do this, let $\tilde{G}$ be the graph obtained by adding all missing edges (if any) between the sets $\tilde{A}$ and $B\setminus B_{k}$, all missing edges (if any) between $A_1$ and $B\setminus B_{k}$, and all missing edges (if any ) in $A_1$, i.e. $\tilde{G}=G[B\setminus  B_k]\vee(\overline{K_{a}}\cup K_{a_1})$. Clearly, $G\subseteq \tilde{G}$. Hence we have $\lambda(\tilde{G})\ge\lambda(G)$. Therefore, it is sufficient to show that $\lambda(G_k(n,s))>\lambda(\tilde{G})$.

\begin{claim}\label{A}
$\lambda(G_k(n,s))>\lambda(\tilde{G})$.
\end{claim}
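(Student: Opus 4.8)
The plan is to compare the defining eigen-equations of $\lambda(\tilde G)$ and $\lambda(G_k(n,s))$ produced by Lemma~\ref{Friedrich Frank}-type computations. Write $\mu=\lambda(\tilde G)$, let $b_1,\dots,b_{k-1}$ be the sizes of the $k-1$ parts of $G[B\setminus B_k]$, and set $b'=\sum_{i=1}^{k-1}b_i$, so $a=n-b'-a_1$, $b'\le b\le s-1$, $a_1=2(s-b)+1\ge 3$, hence $2b'+a_1\le 2s+1$; put $d:=s-b'\ge\tfrac{a_1-1}{2}\ge1$. The case $b'=0$ is immediate: then $\tilde G=\overline{K_a}\cup K_{a_1}$, so $\lambda(\tilde G)=a_1-1\le 2s<\sqrt{s(n-s)}=\lambda(K_{s,n-s})\le\lambda(G_k(n,s))$ (the last inequality because $G_2(n,s)=K_{s,n-s}\subseteq G_k(n,s)$ and $n>5s$), so assume $b'\ge1$.

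First I would write down the eigen-equation of $\tilde G$. All vertices in each of $B_1,\dots,B_{k-1},\tilde A,A_1$ have equal Perron weight; letting $P$ be the weighted sum over $B\setminus B_k$, the weight on $\overline{K_a}$ is $P/\mu$ and on $K_{a_1}$ is $P/(\mu-a_1+1)$ (legitimate since $\mu\ge\sqrt{b'a}\ge\sqrt{n-3s-1}>2s\ge a_1-1$), and substituting into the equations for $B_i$-vertices gives
$$\sum_{i=1}^{k-1}\frac{b_i}{\mu+b_i}=\frac{1}{1+R},\qquad R:=\frac{a}{\mu}+\frac{a_1}{\mu-a_1+1}.$$
By Lemma~\ref{Friedrich Frank}, $\lambda_0:=\lambda(G_k(n,s))$ is the largest root of $\sum_i\frac{s_i}{\lambda+s_i}=\frac{\lambda}{\lambda+n-s}$ (with $s_1,\dots,s_{k-1}$ the almost-equal parts of $T_{k-1}(s)$), and $\lambda\mapsto\sum_i\frac{s_i}{\lambda+s_i}-\frac{\lambda}{\lambda+n-s}$ is strictly decreasing on $(0,\infty)$ and positive exactly for $\lambda<\lambda_0$; hence it suffices to prove $\sum_i\frac{s_i}{\mu+s_i}>\frac{\mu}{\mu+n-s}$.

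The crux is the split
$$\sum_i\frac{s_i}{\mu+s_i}-\frac{\mu}{\mu+n-s}=\Big(\sum_i\frac{s_i}{\mu+s_i}-\sum_i\frac{b_i}{\mu+b_i}\Big)-\Big(\frac{1}{1+\frac{n-s}{\mu}}-\frac{1}{1+R}\Big)=:\mathrm{Gap}_1-\mathrm{Gap}_2 .$$
For $\mathrm{Gap}_1$: by convexity of $x\mapsto\frac1{\mu+x}$, $\sum_i\frac{b_i}{\mu+b_i}$ is at most its value on the balanced partition of $b'$, and passing from the balanced partition of $s$ to that of $b'$ by removing one vertex at a time from a largest part decreases $\sum_i\frac{x_i}{\mu+x_i}$ by $\frac{\mu}{(\mu+p)(\mu+p-1)}\ge\frac{\mu}{(\mu+s)^2}$ at each of the $d$ steps, so $\mathrm{Gap}_1\ge\frac{d\mu}{(\mu+s)^2}$. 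For $\mathrm{Gap}_2$: using $a-(n-s)=d-a_1$ one gets $R-\frac{n-s}{\mu}=\frac{d\mu+(a_1-d)(a_1-1)}{\mu(\mu-a_1+1)}$, and $2b'+a_1\le 2s+1$ gives $(a_1-d)(a_1-1)\le 4d^2$, so $R-\frac{n-s}{\mu}\le\frac{d(\mu+4d)}{\mu(\mu-a_1+1)}$; together with $(1+R)(1+\frac{n-s}{\mu})\ge\frac{a(n-s)}{\mu^2}\ge\frac{(n-3s-1)(n-s)}{\mu^2}$ this yields $\mathrm{Gap}_2\le\frac{d\mu(\mu+4d)}{(\mu-a_1+1)(n-3s-1)(n-s)}$. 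Thus $\mathrm{Gap}_1>\mathrm{Gap}_2$ reduces to
$$(\mu-a_1+1)(n-3s-1)(n-s)>(\mu+4d)(\mu+s)^2,$$
which I would close using $\sqrt{b'a}\le\mu\le(a_1+b'-2)+\sqrt{b'n}$ (the upper bound from the $3\times3$ equitable quotient matrix of $K_{b'}\vee(\overline{K_a}\cup K_{a_1})\supseteq\tilde G$), together with $b'\le s$, $a_1\le 2s+1$, $d\le s$ and $n\ge 4s^2+9s$.

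I expect this last displayed inequality to be the main obstacle. When $b'$ is large ($\mu\asymp\sqrt{sn}$) it holds with a factor $\asymp n/s$ to spare, but when $b'$ is small the factor $\mu-a_1+1$ on the left can shrink to a constant (there $\mu$ is only of order $\sqrt n\asymp s$ when $n\asymp s^2$), and it is exactly to keep the left side above the right in this thin regime that one needs $n$ at least of order $s^2$; I anticipate splitting the verification according to whether $n$ is above or below roughly $16s^2$ (above which $\mu\ge 2(a_1-1)$ and the estimate is soft), or, more cleanly, retaining the dropped term $-\frac{ab'(a_1-1)}{\mu}$ in the upper bound for $\mu$, which is non-negligible precisely when $b'$ is small.
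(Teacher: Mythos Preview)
Your route is genuinely different from the paper's: you compare the Perron eigen-equations of $\tilde G$ and $G_k(n,s)$ head-on, whereas the paper never writes the target equation but instead transforms $\tilde G$ step by step, shifting at each stage two vertices of the clique $A_1$ (one into $\tilde A$, one into some $B_i$) and showing from the explicit factor $f_0$ of the characteristic polynomial that the spectral radius strictly increases; after $(a_1-1)/2$ shifts one reaches a complete $k$-partite graph with small parts summing to at most $s$, and Lemma~\ref{comp partite} finishes. The paper's verification $f_1(\lambda(M))>f_0(\lambda(M))$ is just two polynomial sign checks using $a\ge 4s^2+6s-1$; it is heavier in algebra but leaves no open estimate.

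Your derivation up to the displayed inequality is correct: the eigen-equation $\sum_i \frac{b_i}{\mu+b_i}=\frac1{1+R}$, the monotonicity argument reducing to $\sum_i\frac{s_i}{\mu+s_i}>\frac{\mu}{\mu+n-s}$, the Schur-concavity bound $\mathrm{Gap}_1\ge \frac{d\mu}{(\mu+s)^2}$, and the estimate $\mathrm{Gap}_2\le \frac{d\mu(\mu+4d)}{(\mu-a_1+1)(n-3s-1)(n-s)}$ all check out (note incidentally that $a_1+b'\le 2s$ when $b'\ge1$, so $a\ge n-2s$, a hair better than the $n-3s-1$ you use). The genuine gap is exactly where you flag it: you do not prove
\[
(\mu-a_1+1)(n-3s-1)(n-s)>(\mu+4d)(\mu+s)^2.
\]
You stop at ``which I would close using\ldots'' and then describe the obstacle and a plan (case-split on the size of $n$, or keep the dropped cubic-term correction in the $\mu$ upper bound) without executing it. As it stands this is an outline, not a proof. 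The inequality does appear to hold with your bounds $\sqrt{b'a}\le\mu\le a_1+b'-2+\sqrt{b'n}$: for $b'\ge2$ the left side is $\Theta(s)\cdot\Theta(n^2)$ while the right is $\Theta((sn)^{3/2})$, giving plenty of room; for the tight case $b'=1$ one has $\mu-a_1+1\ge \sqrt{n-2s}-(2s-2)$, which at $n=4s^2+9s$ is bounded below by roughly $15/4$, and then $(\tfrac{15}{4})(4s^2+6s-1)(4s^2+8s)$ beats $(8s)(5s)^2$ for $s\ge4$, with $s=2,3$ checkable by hand. But none of this is in your write-up, so the proof is incomplete. If you want to keep your approach, the cleanest fix is probably to carry out the $b'=1$ case separately with the exact cubic for $\mu$, and for $b'\ge2$ use the soft $\Theta$-comparison above.
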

\begin{proof}
The quotient matrix of $A(\tilde{G})$ according to the partition $B_1\cup \cdots\cup B_{k-1}\cup \tilde{A}\cup A_1$ is
$$M=\begin{pmatrix}
0      & b_2 &\cdots & b_{k-1} & a      & a_1      \\
b_1    & 0   &\cdots & b_{k-1} & a      & a_1     \\
\vdots & \vdots &\ddots & \vdots  & \vdots & \vdots  \\
b_1    & b_2 &\cdots & 0       & a      & a_1     \\
b_1    & b_2 &\cdots & b_{k-1} & 0      & 0     \\
b_1    & b_2 &\cdots & b_{k-1} & 0      & a_1-1     
\end{pmatrix}$$
By Lemma~\ref{quotmatrix}, we have $\lambda(\tilde{G})=\lambda(M)$, where $\lambda(M)$ is the largest eigenvalue of $M$. It can be calculated that the characteristic polynomial of $M$ is
\begin{eqnarray*}\label{EQ: e2}
	\Phi_M(\lambda)
    =(\lambda^2+(a+1)\lambda+a(1-a_1))\prod\limits_{i=1}^{k-1}(-\lambda-b_i)\left(-\frac{\lambda(\lambda+1-a_1)}{\lambda^2+\lambda+a(\lambda+1-a_1)}+\sum\limits_{i=1}^{k-1}\frac{b_i}{b_i+\lambda}\right).
\end{eqnarray*}
Thus, $\lambda(M)$ is the largest root of $\Phi_M(\lambda)$.
{Let 
$$f_{0}(\lambda)=-\frac{\lambda(\lambda+1-a_1)}{\lambda^2+\lambda+a(\lambda+1-a_1)}+\sum\limits_{i=1}^{k-1}\frac{b_i}{b_i+\lambda}$$ 
and $$h(\lambda)=\lambda^2+(a+1)\lambda+a(1-a_1).$$}
Since $a_1\leq 2s+1$, $1\le b_i\leq b\leq s$ and $n\geq 4s^2+9s$, we have $a=n-a_1-b\geq 4s^2+6s-1$. 
Then we have $$f_0(a_1)=-\frac{a_1}{a_1^2+a_1+a}+\sum\limits_{i=1}^{k-1}\frac{b_i}{b_i+a_1}\geq\frac{k-1}{2s+2}-\frac{1}{2\sqrt{a}+1}\geq\frac{1}{2s+2}-\frac{1}{4s-1}>0$$
and
$$f_0(+\infty)=\lim_{\lambda\to+\infty}\left(-1+\frac{(a+a_1)\lambda-a(a_1-1)}{\lambda^2+\lambda+a(\lambda+1)-aa_1}+\sum\limits_{i=1}^{k-1}\frac{b_i}{b_i+\lambda}\right)=-1<0.$$
{Thus the largest root of $f_0(\lambda)$ is larger than  $a_1$. 
Since $h(a_1)=a_1^2+a_1+a>0$ and $a+1>0$, the largest root of $f_0(\lambda)$ and  $\Phi_M(\lambda)$ are the same.} Therefore, we have $\lambda(M)>a_1$.

Next, we will prove that $\lambda(M)<\lambda(G_k(n,s))$ by shifting vertices from $A_1$ to $\tilde{A}$ and some $B_i$ for $i\in[k-1]$.
Specifically, arbitrarily choose an $i\in [k-1]$, let $\tilde{G}_1$ be the graph obtained from $\tilde{G}$ by shifting one vertex from $A_1$ to  $\tilde{A}$ and one vertex  from $A_1$ to some $B_i$, where when we shift a vertex from a set $X$ to another set $Y$, we delete the edges between the vertex and its neighbors and adding new edges connecting it to the neighborhood of $Y$. Note that $\tilde{G}=K_{b_1,\ldots, b_{k-1}}\vee(\overline{K_{a}}\cup K_{a_1})$. Then  $\tilde{G}_1=K_{b_1,\ldots, b_i+1,\ldots,b_{k-1}}\vee(\overline{K_{a+1}}\cup K_{a_1-2})$.
{Let 
\begin{eqnarray*}
f_1(\lambda)
&=&-\frac{\lambda(\lambda+3-a_1)}{\lambda^2+\lambda+(a+1)(\lambda+3-a_1)}+\frac{b_i+1}{b_i+1+\lambda}+\sum\limits_{l=1,l\neq i}^{k-1}\frac{b_l}{b_l+\lambda}.
\end{eqnarray*}}
Then 
\begin{eqnarray*}
f_1(\lambda)-f_0(\lambda)&=&\frac{\lambda}{\lambda^2+(2b_i+1)\lambda+b_i^2+b_i}\\
                    &&-\frac{\lambda(\lambda^2+(2a_1-2)\lambda-a_1^2+4a_1-3)}{(\lambda^2+(a+1)\lambda+a(1-a_1))(\lambda^2+(a+2)\lambda+(a+1)(3-a_1))}\\
                    &:=& \frac{\lambda}{g_1(\lambda)}-\frac{\lambda g_2(\lambda)}{g_3(\lambda)g_4(\lambda)}\\
                    &=&\frac{\lambda g_3(\lambda) g_4(\lambda)-\lambda g_1(\lambda) g_2(\lambda)}{g_1(\lambda)g_3(\lambda)g_4(\lambda)}.
\end{eqnarray*}
Since $a_1\leq 2s+1$, $1\le b_i\le s$, $a\geq 4s^2+6s-1$,  and  $\lambda(M)> a_1$, we have 
\begin{eqnarray*}
g_3(\lambda(M))-g_2(\lambda(M))
&=&(1+a-2a_1+2)\lambda(M)+a(1-a_1)+(a_1-1)(a_1-3)\\
&>&(1+a-2a_1+2)a_1+a(1-a_1)+(a_1-1)(a_1-3)\\
&=&a-a_1^2-a_1+3\\
&\ge &4s^2+6s-1-(2s+1)^2-(2s+1)+3\\
&\geq&0,
\end{eqnarray*}
and
\begin{eqnarray*}
g_4(\lambda(M))-g_1(\lambda(M))&=&(a-2b_i+1)\lambda(M)+(a+1)(3-a_1)-b_i^2-b_i\\
&>&(a-2b_i+1)a_1+(a+1)(3-a_1)-b_i^2-b_i\\
&\geq&3(4s^2+6s)-5s^2-3s\\
&>&0.
\end{eqnarray*}
Therefore, we have $f_1(\lambda(M))-f_0(\lambda(M))>0$, which implies that the spectral radius increases after one shifting operation.
Therefore, after $t=\frac{a_1-1}2$ times of shifting operations, we get a complete $k$-partite graph $\tilde{G}_t=K_{b'_1,\ldots,b'_{k-1},a'}$, where $\sum\limits_{i=1}^{k-1} b'_i=s-b_k\leq s$ and $a'=n-s+b_k\ge n-s$. By Lemma~\ref{comp partite}, we have $\lambda(G_k(n,s))\ge\lambda(\tilde{G}_t)>\lambda(\tilde{G})$. This completes the proof of Case 2.
\end{proof}

The proof of Theorem~\ref{main} is completed.

\end{proof}

\section{Concluding Remarks}
In this note, we determine $\spex(n,\{K_{k+1},M_{s+1}\})$ when $n>4s^2+9s$, we believe that the lower bound of $n$ can be optimized, and when $n$ is small, the extremal graph will be $T_k(n)$. We leave this as a problem.

\noindent{\bf Acknowledgment:} {The work was supported by the National Natural Science Foundation of China (No. 12071453), the National Key R and D Program of China(2020YFA0713100), the Anhui Initiative in Quantum Information Technologies (AHY150200)  and the Innovation Program for Quantum Science and Technology, China (2021ZD0302904).}

\noindent{\bf Data Availability:} Data sharing is not applicable to this article as no datasets
were generated or analyzed during the current study.


\begin{thebibliography}{99}
\bibitem{AF}
N. Alon, P. Frankl, Turán graphs with bounded matching number, arXiv.org. (2022). https://arxiv.org/abs/2210.15076. 

\bibitem{Babai}
L. Babai, B. Guiduli, Spectral extrema for graphs: The Zarankiewicz problem, The Electronic Journal of Combinatorics. 16 (2009) R123.

\bibitem{TB}
C. Berge, Sur le couplage maximum d'un graphe, C.R. Acad. Sci. Paris S\'er. I Math, 247(1958) 258-259.

\bibitem{odd wheels}
S. Cioabă, D.N. Desai, M. Tait, The spectral radius of graphs with no odd wheels, European Journal of Combinatorics. 99 (2022) 103420. 

\bibitem{EG}
J. Edmonds, Paths, trees, and Flowers, Canadian Journal of Mathematics. 17 
 (1965) 449-467.

\bibitem{EG59}
P. Erdős, T. Gallai, On maximal paths and circuits of graphs, Acta Mathematica Academiae Scientiarum Hungaricae. 10 (1959) 337-356.

\bibitem{Friedrich Frank}
F. Esser, F. Harary, On the spectrum of a complete multipartite graph, European Journal of Combinatorics. 1 (1980) 211-218. 

\bibitem{zxd}
L. Feng, G. Yu, X.D. Zhang, Spectral radius of graphs with given matching number, Linear Algebra and Its Applications. 422 (2007) 133-138.

\bibitem{Ck}
J. Gao, X. Hou, The spectral radius of graphs without long cycles, Linear Algebra and Its Applications. 566 (2019) 17-33.

\bibitem{quotmatrix}
C.D. Godsil, Algebraic combinatorics, Routledge, (2017). 

\bibitem{T4}
X. Hou, B. Liu, S. Wang, J. Gao, C. Lv, The spectral radius of graphs without trees of diameter at most four, Linear and Multilinear Algebra. 69 (2019) 1407-1414.

\bibitem{edge-disjoint cycle}
H. Lin, M. Zhai, Y. Zhao, Spectral radius, edge-disjoint cycles and cycles of the same length, The Electronic Journal of Combinatorics. 29 (2022) .  

\bibitem{subtrees}
X. Liu, H. Broersma, L. Wang, Spectral radius conditions for the existence of all subtrees of diameter at most four, Linear Algebra and Its Applications. 663 (2023) 80-101. 

\bibitem{NWK23}
Z. Ni, J. Wang, L. Kang, Spectral extremal graphs for disjoint cliques, The Electronic Journal of Combinatorics. 30 (2023). 

\bibitem{Niki}
V. Nikiforov, Bounds on graph eigenvalues II, Linear Algebra and Its Applications. 427 (2007) 183-189. 

\bibitem{surveyN}
V. Nikiforov, Some new results in extremal graph theory, Surveys in Combinatorics 2011. (2011) 141-182. 

\bibitem{Pk}
V. Nikiforov, The spectral radius of graphs without paths and cycles of specified length, Linear Algebra and Its Applications. 432 (2010) 2243-2256.

\bibitem{survey}
D.A. Spielman, Spectral graph theory and its applications, 48th Annual IEEE Symposium on Foundations of Computer Science (FOCS'07). (2007) 29-38. 

\bibitem{Kst-minort}
M. Tait, The colin de verdière parameter, excluded minors, and the spectral radius, Journal of Combinatorial Theory, Series A. 166 (2019) 42-58. 

\bibitem{Tr}
P. Tur\'an, On an extremal problem in graph theory, Matematikai\'es Fizikai Lapok (in Hungarian). 48(1941) 436-452.

\bibitem{Kst-minorz}
M. Zhai, H. Lin, Spectral extrema of $K_{s, t}$-minor free graphs – on a conjecture of M. Tait, Journal of Combinatorial Theory, Series B. 157 (2022) 184-215. 


\end{thebibliography}
\end{document}